\documentclass[10pt]{amsart}
\usepackage{amssymb}
\textwidth 15.1cm \textheight 21.08cm \topmargin 0.0cm
\oddsidemargin 0.0cm \evensidemargin 0.0cm
\parskip -0.0cm

\newtheorem{lemma}{Lemma}
\newtheorem{corollary}{Corollary}
\newtheorem{theorem}{Theorem}
\newtheorem{remark}{Remark}
\newtheorem{exe}{Example}

\newtheorem{theorem a}{Theorem A}
\newtheorem{theorem b}{Theorem B}
\newtheorem{theorem C}{Theorem C}

\begin{document}

\title [New inequalities for for generalized Mathieu series and Riemann zeta functions]{Some new inequalities for Generalized  MATHIEU TYPE SERIES and Riemann zeta functions. \\}%

	\author[K. Mehrez, \v Z. Tomovski]{Khaled Mehrez and \v Zivorad Tomovski}

\address{Khaled Mehrez. D\'epartement de Math\'ematiques ISSAT Kasserine,
Universit\'e de Kairouan, Tunisia.}
\email{k.mehrez@yahoo.fr}
\address{\v{Z}ivorad Tomovski. University "St. Cyril and Methodius", Faculty
of Natural Sciences and Mathematics, Institute of Mathematics, Repubic of
Macedonia.}
\email{tomovski@pmf.ukim.edu.mk}
\maketitle

\begin{abstract} Our aim in this paper is to show some new inequalities for Mathieu's type series and Riemann zeta function. In particular, some Tur\'an type inequalities, some monotonicity and log-convexity results for these special functions are given. New Laplace type integral representations for Mathie type series and Riemann zeta function are also presented. 
\end{abstract}
\vspace{.1cm}
\noindent{\textbf{ Keywords:}} Mathieu series,  Generalized Mathieu series, Riemann zeta function,
Bessel function, Mellin and Laplace integral representation, Tur\'an type inequalities, Inequalities. \\

\noindent \textbf{Mathematics Subject Classification (2010)}: 3B15, 33E20,
60E10, 11M35.

\section{Introduction}
The infinite series
\begin{equation}
S(r)=\sum_{n=1}^\infty\frac{2n}{(n^2+r^2)^2},
\end{equation}
is called a Mathieu series. It was introduced and studied by \'Emile Leonard Mathieu in his
book \cite{18} devoted to the elasticity of solid bodies.  Bounds for this series
are needed for the solution of boundary value problems for the biharmonic equations in a two--dimensional rectangular domain, see [\cite{13}, Eq. (54), p. 258].  A remarkable useful integral representation for $S(r)$ is given
by Emersleben \cite{7} in the following form
\begin{equation}
S(r)=\frac{1}{r}\int_0^\infty\frac{x\sin(rx)dx}{e^x-1}.
\end{equation}

The so-called generalized Mathieu series with a fractional power reads \cite{C}

\begin{equation}\label{0t1}
S_\mu(r)=\sum_{n=1}^\infty\frac{2n}{(n^2+r^2)^{\mu+1}},\;\mu>0,\;r>0,
\end{equation}
such series has been widely considered in mathematical literature, see \cite{C,Z1,ZK}.  Cerone and Lenard derived also the next integral expression \cite{C}
\begin{equation}\label{int}
S_\mu(r)=C_\mu(r)\int_0^\infty\frac{x^{\mu+1/2}}{e^x-1}J_{\mu-1/2}(rx)dx,\;\mu>0,
\end{equation}
where 
$$C_\mu(r)=\frac{\sqrt{\pi}}{(2r)^{\mu-1/2}\Gamma(\mu+1)}.$$

In the literature, the study of Mathieu's series and its inequalities has a rich literature,
 many interesting refinements and extensions of Mathieu's inequality can be found in \cite{ZT,ZK}.\\

In this paper is organized as follows. In section 2,  we state some useful Lemmas, which are useful in the proofs of our results. In section 3, we prove some new inequalities for Mathieu's series. In particular,
 we present the Tur\'an type inequality for this function. Moreover, we present some monotonicity and convexity results for the function $\mu\mapsto S_\mu(r).$ As consequence we establish some functional inequalities. At the end of this section, we derive the Laplace integral representation  of such series. In section 3, as applications of our main results in the section 2, we derive some new inequalities for Riemann zeta function.

Before we present the main results of this paper we recall some definitions, which will be used in the
sequel. A function $f:(0,\infty)\rightarrow\mathbb{R}$ is said to be completely monotonic if $f$ has derivatives of all orders and satisfies
$$(-1)^n f^{(n)}(x)\geq0,$$
for all $x\geq0$ and $n\in\mathbb{N}.$
 We say that a function $g:[a,b]\subseteq\mathbb{R}\rightarrow\mathbb{R}$ 
 is said to be log-convex if its natural logarithm $\log g$ is convex, that is, for all $x,y\in[a,b]$ and $\alpha\in[0,1]$ we have
$$g(\alpha x+(1-\alpha)y)\leq[g(x)]^\alpha[g(y)]^{1-\alpha}.$$

\section{Some preliminary Lemmas}

In this section, we state the following Lemmas, which are useful in the proofs of our results.

\begin{lemma}\label{l1}\cite{J}(Jensen inequality)
Let $\mu$ be a probability measure and let $\varphi\geq0$ be a convex function. Then, for all $f$ be a integrable function we have 
\begin{equation}\label{k1}
\int \varphi\circ f d\nu\geq \varphi\left(\int f d\nu\right).
\end{equation}
\end{lemma}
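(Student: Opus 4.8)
The plan is to reduce the claim to the existence of a supporting line for the convex function $\varphi$ at the single point $t_0:=\int f\,d\nu$. First I would note that since $\nu$ is a probability measure and $f$ is integrable, $t_0$ is a well-defined real number, and it lies in the interval $I$ which is the convex hull of the range of $f$; on $I$ the function $\varphi$ is convex, hence it possesses one-sided derivatives at every interior point. If $t_0$ is interior to $I$, choose $a\in\R$ with $\varphi'_-(t_0)\le a\le\varphi'_+(t_0)$, so that
$$\varphi(t)\ \ge\ \varphi(t_0)+a\,(t-t_0),\qquad t\in I.$$
If instead $t_0$ is an endpoint of $I$, then necessarily $f=t_0$ holds $\nu$-almost everywhere and \eqref{k1} reduces to an equality, so there is nothing to prove.

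Next I would substitute $t=f(x)$ into the supporting-line inequality and integrate against $\nu$. Because $\varphi\ge 0$, the integral $\int\varphi\circ f\,d\nu$ is well defined in $[0,+\infty]$; if it equals $+\infty$ the inequality is trivial, so one may assume it is finite. Integrating
$$\varphi\big(f(x)\big)\ \ge\ \varphi(t_0)+a\,\big(f(x)-t_0\big)$$
term by term and using $\int d\nu=1$ together with $\int f\,d\nu=t_0$ yields
$$\int\varphi\circ f\,d\nu\ \ge\ \varphi(t_0)+a\Big(\int f\,d\nu-t_0\Big)\ =\ \varphi(t_0)\ =\ \varphi\Big(\int f\,d\nu\Big),$$
which is exactly \eqref{k1}.

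The only genuine obstacle is the construction of the affine minorant in the first step, and this is precisely where convexity is used. It rests on the standard fact that for a convex function the difference quotients $\big(\varphi(s)-\varphi(t_0)\big)/(s-t_0)$ are nondecreasing in $s$, so that the supremum of these slopes over $s<t_0$ does not exceed the infimum of them over $s>t_0$; any $a$ lying between the two bounds gives the required inequality $\varphi(t)\ge\varphi(t_0)+a(t-t_0)$. The positivity hypothesis $\varphi\ge 0$ plays no role in this argument and serves only to give unambiguous meaning to a possibly infinite left-hand side; it could equally be replaced by the assumption $\varphi\circ f\in L^{1}(\nu)$. All remaining steps amount to a single integration exploiting that $\nu$ has total mass one.
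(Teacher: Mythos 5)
Your proof is correct. The paper does not prove this lemma at all --- it is quoted from Jensen's 1906 paper as a known result --- so there is no argument to compare against; what you give is the standard supporting-line proof (choose $a$ between the one-sided derivatives of $\varphi$ at $t_0=\int f\,d\nu$, use $\varphi(t)\ge\varphi(t_0)+a(t-t_0)$, substitute $t=f(x)$ and integrate against the probability measure), and your handling of the edge cases (the possibly infinite left-hand side, which is where $\varphi\ge0$ is used, and the case where $t_0$ is an endpoint of the convex hull of the range of $f$, forcing $f=t_0$ almost everywhere) is careful and accurate. One cosmetic point worth flagging: the lemma as stated in the paper names the probability measure $\mu$ but integrates against $\nu$; you silently and correctly read $\nu$ as the probability measure, which is also how the lemma is used later in the paper.
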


Our next Lemma is well-known and is stated only for easy reference, see for example [\cite{BA}. Eq. 10, p. 313]

\begin{lemma}\label{l4}Let $\mu>2.$ Then the hollowing identity 
\begin{equation}
\int_0^\infty \frac{t^{\mu-1}}{(e^t-1)^2}dt=\Gamma(\mu)\left(\zeta(\mu-1)-\zeta(\mu)\right),
\end{equation}
holds.
\end{lemma}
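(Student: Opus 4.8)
The plan is to establish the integral identity
\[
\int_0^\infty \frac{t^{\mu-1}}{(e^t-1)^2}\,dt=\Gamma(\mu)\bigl(\zeta(\mu-1)-\zeta(\mu)\bigr),\qquad \mu>2,
\]
by expanding the integrand in a geometric-type series and integrating term by term. First I would write, for $t>0$,
\[
\frac{1}{(e^t-1)^2}=\frac{e^{-2t}}{(1-e^{-t})^2}=\sum_{n=1}^\infty n\,e^{-(n+1)t},
\]
using the standard expansion $(1-x)^{-2}=\sum_{k\ge0}(k+1)x^k$ with $x=e^{-t}$ and reindexing. Substituting into the integral and interchanging sum and integral (justified below) gives
\[
\int_0^\infty \frac{t^{\mu-1}}{(e^t-1)^2}\,dt=\sum_{n=1}^\infty n\int_0^\infty t^{\mu-1}e^{-(n+1)t}\,dt
=\sum_{n=1}^\infty \frac{n\,\Gamma(\mu)}{(n+1)^{\mu}}.
\]
Then I would write $n=(n+1)-1$ to split the sum as $\sum_{n\ge1}(n+1)^{1-\mu}-\sum_{n\ge1}(n+1)^{-\mu}$, and shift the index $m=n+1$ so that each piece becomes a tail of a zeta series: $\sum_{m\ge2}m^{1-\mu}=\zeta(\mu-1)-1$ and $\sum_{m\ge2}m^{-\mu}=\zeta(\mu)-1$. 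Subtracting, the $-1$ terms cancel and one is left with $\Gamma(\mu)\bigl(\zeta(\mu-1)-\zeta(\mu)\bigr)$, which is the claim.

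The one genuine technical point — and the step I expect to need the most care — is the justification of the termwise integration and the convergence of the resulting series. Convergence of $\sum_{n}n/(n+1)^\mu$ requires $\mu>2$, which is exactly the hypothesis; equivalently $\zeta(\mu-1)$ is finite only for $\mu-1>1$. For the interchange I would invoke Tonelli's theorem: every term $n\,t^{\mu-1}e^{-(n+1)t}$ is nonnegative on $(0,\infty)$, so the sum and the integral commute unconditionally, and the common value is finite precisely because the right-hand side is. The elementary evaluation $\int_0^\infty t^{\mu-1}e^{-at}\,dt=\Gamma(\mu)a^{-\mu}$ for $a>0$, $\mu>0$, is the classical Gamma-function integral and needs no further comment. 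Since the paper explicitly states this lemma is well known and cited from [\cite{BA}, Eq. 10, p. 313], a short proof along these lines suffices, and no deeper input than the geometric series, the Gamma integral, and the definition of $\zeta$ is needed.
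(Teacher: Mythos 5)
Your proof is correct and complete. The paper itself offers no argument for this lemma --- it simply cites it as a known formula from the tables of Erd\'elyi et al.\ --- so your self-contained derivation (expanding $(e^t-1)^{-2}=\sum_{n\ge1}n\,e^{-(n+1)t}$, integrating termwise via Tonelli, and splitting $n=(n+1)-1$ so the shifted zeta tails cancel their $-1$'s) is exactly the standard proof one would supply, and every step, including the role of the hypothesis $\mu>2$ in making $\zeta(\mu-1)$ finite, is handled properly.
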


The following inequality for completely monotonic functions is due to Kimberling \cite{Ki}.

\begin{lemma} \label{l2}Let $f;(0,\infty)\longrightarrow(0,1)$ be continuous. If $f$ is completely monotonic, then
$$f(x)f(y)\leq f(x+y),\;x,y\geq0.$$
\end{lemma}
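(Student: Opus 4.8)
The plan is to convert the multiplicative inequality into a \emph{subadditivity} statement for $g:=-\log f$, exploiting two facts: every completely monotonic function is log-convex, and every nonnegative concave function on $[0,\infty)$ is subadditive. Being completely monotonic, $f$ is $C^\infty$ and nonincreasing (since $f'\le0$), so $f(0^+):=\lim_{x\downarrow0}f(x)$ exists; from $0<f<1$ on $(0,\infty)$ we get $0<f(0^+)\le1$, and we set $f(0):=f(0^+)$, making $f$ positive and continuous on $[0,\infty)$. By Bernstein's theorem there is a positive Borel measure $\nu$ on $[0,\infty)$ with
$$
f(x)=\int_{[0,\infty)}e^{-xt}\,d\nu(t),\qquad x\ge0,
$$
and monotone convergence together with $f<1$ forces $\nu([0,\infty))=f(0)\le1$. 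Applying H\"{o}lder's inequality to this representation (or Jensen's inequality, Lemma \ref{l1}, to the convex exponential) gives, for $x_0,x_1\ge0$ and $\lambda\in[0,1]$,
$$
f\big(\lambda x_0+(1-\lambda)x_1\big)=\int_{[0,\infty)}\big(e^{-x_0 t}\big)^{\lambda}\big(e^{-x_1 t}\big)^{1-\lambda}\,d\nu(t)\le f(x_0)^{\lambda}\,f(x_1)^{1-\lambda},
$$
that is, $\log f$ is convex on $[0,\infty)$; equivalently $g=-\log f$ is concave, and $g\ge0$ because $0<f\le1$.

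It then suffices to observe that a nonnegative concave function $g$ on $[0,\infty)$ is subadditive. For $x,y\ge0$ with $s:=x+y>0$, concavity gives $g(x)\ge\frac{x}{s}g(s)+\frac{y}{s}g(0)\ge\frac{x}{s}g(s)$ and likewise $g(y)\ge\frac{y}{s}g(s)$; adding yields $g(x)+g(y)\ge g(s)=g(x+y)$, while the case $x=y=0$ is trivial. Exponentiating the inequality $-\log f(x+y)\le-\log f(x)-\log f(y)$ then produces $f(x+y)\ge f(x)f(y)$, which is the assertion.

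The real content is the first step: justifying the Bernstein representation and the finiteness of $\nu$ (this is where the hypothesis $f<1$ is used) and deducing the convexity of $\log f$ — equivalently the Tur\'an-type inequality $f\,f''\ge(f')^{2}$, which follows by Cauchy--Schwarz from $f'(x)=-\int t\,e^{-xt}\,d\nu(t)$ and $f''(x)=\int t^{2}e^{-xt}\,d\nu(t)$. The concavity/subadditivity step and the final exponentiation are routine; the only other point needing care is that the conclusion is claimed for $x,y\ge0$, which is exactly why the limiting value $f(0^+)$ must be incorporated and why $f(0^+)\le1$ (and not merely $f<1$ on the open half-line) is what guarantees $g(0)\ge0$.
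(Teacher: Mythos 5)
Your proof is correct. Note that the paper itself gives no argument for this lemma at all: it is stated for reference and attributed to Kimberling \cite{Ki}, so there is nothing internal to compare against. Your route --- extend $f$ to $0$ by the monotone limit $f(0^+)\in(0,1]$, invoke Bernstein's theorem to write $f(x)=\int_{[0,\infty)}e^{-xt}\,d\nu(t)$ with $\nu([0,\infty))=f(0^+)\le 1$, deduce log-convexity of $f$ by H\"older, and then use the fact that a nonnegative concave function on $[0,\infty)$ is subadditive (which is exactly where $g(0)=-\log f(0^+)\ge 0$, i.e.\ the hypothesis $f<1$, enters) --- is a complete and standard derivation; the subadditivity step $g(x)\ge \frac{x}{s}g(s)+\frac{y}{s}g(0)\ge\frac{x}{s}g(s)$ is carried out correctly, and you rightly flag that the extension to $x=0$ or $y=0$ requires defining $f(0):=f(0^+)$. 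Two minor remarks: (i) Bernstein's theorem is heavier machinery than necessary, since log-convexity of completely monotonic functions follows directly from the Cauchy--Schwarz-type inequality $f\,f''\ge(f')^2$ (a fact the paper itself quotes from Widder), which you note yourself at the end; Kimberling's original argument is essentially the probabilistic version of your measure-theoretic one. (ii) Your appeal to ``Lemma \ref{l1}'' (Jensen) as an alternative to H\"older is slightly off, since $\nu$ need not be a probability measure, but H\"older alone suffices, so nothing is lost.
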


The next Lemma is given in \cite{CE}.

\begin{lemma}\label{l3} For all $\mu>0$ and $r>0.$ Then the integral of the Mathieu's series 
on $(0,\infty)$ holds true:
\begin{equation}\label{lll}
\int_0^\infty S_\mu(r)dr=\frac{\sqrt{\pi}\Gamma(\mu+1/2)\zeta(2\mu)}{\Gamma(\mu+1)}.
\end{equation}
\end{lemma}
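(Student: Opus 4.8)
The plan is to integrate the defining series \eqref{0t1} term by term. Every summand $r\mapsto 2n/(n^2+r^2)^{\mu+1}$ is nonnegative and continuous on $(0,\infty)$, so Tonelli's theorem permits the exchange of summation and integration:
\[
\int_0^\infty S_\mu(r)\,dr=\sum_{n=1}^\infty 2n\int_0^\infty\frac{dr}{(n^2+r^2)^{\mu+1}}.
\]
The whole proof then reduces to evaluating this elementary inner integral and summing the resulting Dirichlet series.

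For the inner integral I would substitute $r=n\tan\theta$, which gives
\[
\int_0^\infty\frac{dr}{(n^2+r^2)^{\mu+1}}=n^{-2\mu-1}\int_0^{\pi/2}\cos^{2\mu}\theta\,d\theta
=\frac{n^{-2\mu-1}}{2}\,B\!\left(\tfrac12,\mu+\tfrac12\right)=\frac{\sqrt{\pi}\,\Gamma(\mu+1/2)}{2\,\Gamma(\mu+1)}\,n^{-2\mu-1},
\]
where $B$ is the Euler Beta function and $\Gamma(1/2)=\sqrt{\pi}$. Consequently the $n$-th term above equals $\dfrac{\sqrt{\pi}\,\Gamma(\mu+1/2)}{\Gamma(\mu+1)}\,n^{-2\mu}$, and summing over $n\ge 1$ produces the factor $\sum_{n\ge 1}n^{-2\mu}=\zeta(2\mu)$, which is exactly \eqref{lll}.

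The only point demanding care is convergence. Near $r=0$ nothing needs checking, since $S_\mu$ is continuous there with $S_\mu(0)=2\zeta(2\mu+1)<\infty$; but as $r\to\infty$ one has $S_\mu(r)\asymp r^{-2\mu}$, so the left-hand integral — equivalently the series $\sum_{n\ge 1}n^{-2\mu}$ — converges precisely when $2\mu>1$, and it is for such $\mu$ that the stated identity genuinely holds. In that range the positivity of the summands makes the Tonelli interchange automatic and every step above is rigorous, so there is no real obstacle: once the term-by-term integration is justified, the identity is an immediate consequence of the standard Beta-function evaluation of $\int_0^\infty(n^2+r^2)^{-\mu-1}\,dr$.
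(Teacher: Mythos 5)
Your proof is correct and complete; the paper itself gives no proof of this lemma (it is quoted from \cite{CE}), and your term-by-term integration combined with the Beta-function evaluation $\int_0^\infty (n^2+r^2)^{-\mu-1}\,dr=\tfrac{\sqrt{\pi}\,\Gamma(\mu+1/2)}{2\Gamma(\mu+1)}n^{-2\mu-1}$ is exactly the standard argument behind it. Your remark on the range of validity is also a genuine (and worthwhile) correction: since $S_\mu(r)\asymp r^{-2\mu}$ as $r\to\infty$, the integral converges only for $\mu>1/2$, so the hypothesis ``$\mu>0$'' in the statement should really read $\mu>1/2$; this does not affect the paper's later use of the lemma, which is only invoked for $\mu\geq 1$.
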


\section{Some new inequalities for Mathieu's series}

Our main  results is the following Theorem.

\begin{theorem}Let $p>1$. Then the following inequalities
\begin{equation}\label{111}
 S_\mu^p(r)\leq\left\{ \begin{array}{ll}
 C_\mu^p(r)\Gamma^{p-1}(\mu+1/2)\zeta^{p-1}(\mu+1/2)
\Gamma(p+\mu+1/2)\zeta(p+\mu+1/2),& \textrm{if\;$\mu\geq1$}\\
\frac{C_\mu^p(r)}{2^{\frac{p}{2}}}\Gamma^{p-1}(\mu+1/2)\zeta^{p-1}(\mu+1/2)
\Gamma(p+\mu+1/2)\zeta(p+\mu+1/2)& \textrm{if\;$\mu\geq\frac{3}{2}$}
\end{array} \right.
\end{equation}
holds true for all $r\in(0,\infty),$ where $\zeta(.)$ denotes the Riemann zeta function 
defined by
$$\zeta(p)=\sum_{n=1}^\infty\frac{1}{n^p}.$$
Moreover, the following inequalities holds true 
\begin{equation}\label{k22}
S(r)\leq\frac{\sqrt{15\pi\zeta(3/2)\zeta(7/2)}}{4\sqrt{2r}}\pi,\;\textrm{and}\;\;S_{3/2}\leq\frac{\pi^3}{9\sqrt{10}r^2}.
\end{equation}
for all $r\in(0,\infty).$
\end{theorem}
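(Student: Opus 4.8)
The plan is to run the Bessel–integral representation (\ref{int}) through Jensen's inequality. The point is that the weight $x^{\mu-1/2}/(e^{x}-1)$ appearing in (\ref{int}) is, after normalisation, a probability density on $(0,\infty)$, so the $p$-th power of $S_\mu(r)$ is governed by Jensen applied to the convex function $\varphi(t)=t^{p}$, $p>1$. I will use two standard facts: the classical formula
\[
\int_0^\infty \frac{t^{s-1}}{e^{t}-1}\,dt=\Gamma(s)\,\zeta(s),\qquad s>1,
\]
obtained by expanding $(e^{t}-1)^{-1}=\sum_{n\ge1}e^{-nt}$ and integrating termwise, together with the well-known uniform bounds for the Bessel function: $|J_\nu(x)|\le 1$ for $\nu\ge\tfrac12$ and the sharper $|J_\nu(x)|\le 2^{-1/2}$ for $\nu\ge1$, both valid for all real $x$.

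Fix $\mu>\tfrac12$, set $Z_\mu:=\Gamma(\mu+\tfrac12)\zeta(\mu+\tfrac12)=\int_0^\infty \frac{x^{\mu-1/2}}{e^{x}-1}\,dx$, and let $d\nu_\mu:=\frac{1}{Z_\mu}\,\frac{x^{\mu-1/2}}{e^{x}-1}\,dx$, a probability measure on $(0,\infty)$. Writing $g(x):=x\,J_{\mu-1/2}(rx)$, formula (\ref{int}) becomes
\[
S_\mu(r)=C_\mu(r)\int_0^\infty x\,J_{\mu-1/2}(rx)\,\frac{x^{\mu-1/2}}{e^{x}-1}\,dx=C_\mu(r)\,Z_\mu\int_0^\infty g(x)\,d\nu_\mu(x).
\]
Since $S_\mu(r)>0$ and $C_\mu(r)>0$, we may pass to absolute values (this is exactly what neutralises the oscillation of $J_{\mu-1/2}$), getting $S_\mu(r)\le C_\mu(r)\,Z_\mu\int_0^\infty|g|\,d\nu_\mu$; raising to the power $p$ and invoking Lemma \ref{l1} with $\varphi(t)=t^{p}$ yields
\[
S_\mu^{p}(r)\le C_\mu^{p}(r)\,Z_\mu^{p}\Bigl(\int_0^\infty|g|\,d\nu_\mu\Bigr)^{p}\le C_\mu^{p}(r)\,Z_\mu^{p}\int_0^\infty|g|^{p}\,d\nu_\mu=C_\mu^{p}(r)\,Z_\mu^{p-1}\int_0^\infty\frac{x^{p+\mu-1/2}}{e^{x}-1}\,\bigl|J_{\mu-1/2}(rx)\bigr|^{p}\,dx.
\]

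Now bound $|J_{\mu-1/2}(rx)|^{p}$ uniformly in $x\in(0,\infty)$. If $\mu\ge1$ then $\mu-\tfrac12\ge\tfrac12$, so $|J_{\mu-1/2}(rx)|^{p}\le 1$ and the last integral is at most $\int_0^\infty \frac{x^{p+\mu-1/2}}{e^{x}-1}\,dx=\Gamma(p+\mu+\tfrac12)\zeta(p+\mu+\tfrac12)$; combined with $Z_\mu^{p-1}=\Gamma^{p-1}(\mu+\tfrac12)\zeta^{p-1}(\mu+\tfrac12)$ this is precisely the first branch of (\ref{111}). If moreover $\mu\ge\tfrac32$, then $\mu-\tfrac12\ge1$ and $|J_{\mu-1/2}(rx)|^{p}\le 2^{-p/2}$, which inserts the extra factor $2^{-p/2}$ and gives the second branch of (\ref{111}). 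The only genuine point to check is the legitimacy of the Jensen step: $\nu_\mu$ is a probability measure provided $\mu>\tfrac12$ (so this is fine on both ranges $\mu\ge1$ and $\mu\ge\tfrac32$), and $|g|$ is $\nu_\mu$–integrable — indeed the displayed estimate already shows $\int|g|^{p}\,d\nu_\mu<\infty$, and the case $p=1$ is handled the same way; convergence near $0$ is automatic because $g(x)\sim \tfrac{(r/2)^{\mu-1/2}}{\Gamma(\mu+1/2)}x^{\mu+1/2}$ there.

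Finally, the estimates in (\ref{k22}) are the specialisations to $p=2$. Taking $\mu=1$ in the first branch of (\ref{111}) — so $S(r)=S_1(r)$ and $C_1(r)=\sqrt{\pi}/\sqrt{2r}$ — and inserting $\Gamma(\tfrac32)=\tfrac{\sqrt\pi}{2}$, $\Gamma(\tfrac72)=\tfrac{15\sqrt\pi}{8}$, then taking a square root, gives the bound for $S(r)$; taking $\mu=\tfrac32$ in the second branch — so $C_{3/2}(r)=\tfrac{2}{3r}$ — and inserting $\Gamma(2)=1$, $\zeta(2)=\tfrac{\pi^{2}}{6}$, $\Gamma(4)=6$, $\zeta(4)=\tfrac{\pi^{4}}{90}$, then taking a square root, gives the bound for $S_{3/2}(r)$. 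In both cases this is just elementary simplification of the constants, so the main work — and the only real obstacle — is the Jensen argument above, i.e.\ recognising the Bessel–integral weight as a probability density and controlling the oscillatory factor by the uniform Bessel bounds.
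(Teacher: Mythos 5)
Your argument is correct and follows essentially the same route as the paper: Jensen's inequality applied to the normalised Bose--Einstein weight in the representation (\ref{int}), followed by the uniform Bessel bounds and the formula $\int_0^\infty t^{s-1}(e^t-1)^{-1}\,dt=\Gamma(s)\zeta(s)$; the only (harmless) difference is that the paper absorbs the factor $|J_{\mu-1/2}(rx)|$ into the probability measure (normalising by $K_\mu(r)=\int_0^\infty x^{\mu-1/2}|J_{\mu-1/2}(rx)|(e^x-1)^{-1}dx$) while you keep it in the integrand, and both choices lead to the identical final bound in (\ref{111}). One caveat on the last step: carrying out the $p=2$ substitutions exactly as you describe yields $S(r)\le \pi\sqrt{15\,\zeta(3/2)\zeta(7/2)}/(4\sqrt{2r})$ and $S_{3/2}(r)\le \pi^3/(9\sqrt{5}\,r)$, which do not literally match the constants displayed in (\ref{k22}) (an extra $\pi$ under the square root in the first, and $\sqrt{10}\,r^2$ in place of $\sqrt{5}\,r$ in the second); this discrepancy lies in the paper's stated constants rather than in your method, but it means the phrase ``gives the bound'' should be read as ``gives the intended bound after correcting the constants.''
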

\begin{proof}
Let $p>1$, we define the function $\varphi:(0,\infty)\rightarrow\mathbb{R}$ by $\varphi(x)=x^p$ 
 and we set 
$$f(x)=x,\;d\nu(x)=\frac{x^{\mu-1/2}\left|J_{\mu-1/2}(rx)\right|}{K_\mu(r)(e^x-1)}dx,$$
where 
$$K_\mu(r)=\int_0^\infty\frac{x^{\mu-1/2}\left|J_{\mu-1/2}(rx)\right|}{e^x-1}dx.$$
So, by means of Lemma \ref{l1} and the representation integral (\ref{int}) we get
\begin{equation}\label{k5}
\begin{split}
S^p(x)&\leq C_\mu^{p}(r)\left(\int_0^\infty\frac{x^{\mu+1/2}\left|J_{\mu-1/2}
(rx)\right|}{e^x-1}dx\right)^p\\
&=C_\mu^{p}(r) K_\mu^p(r)\left(\int_0^\infty x d\nu(x)\right)^p\\
&\leq C_\mu^{p}(r) K_\mu^{p-1}(r)\int_0^\infty \frac{x^{p+\mu-1/2}\left|J_{\mu-1/2}(rx)\right|}{e^x-1}dx.
\end{split}
\end{equation}
Combining the previous inequality  and von Lommel's uniform bounds [\cite{LO},\cite{24}, p. 406] 
$$\left|J_\nu(x)\right|\leq 1,\;\nu\geq0\;\;\textrm{and}\;\;
\left|J_\nu(x)\right|\leq\frac{1}{\sqrt{2}},\;\nu\geq1$$
and the representation, 
\begin{equation}\label{000}
\int_0^\infty\frac{x^\mu}{e^x-1}dx=\Gamma(\mu+1)\zeta(\mu+1),
\end{equation}
we obtain the desired inequality (\ref{111}). Finally, let $p=2,\;\mu=1,$ and $p=2,\;\mu=3/2$
 respectively in  (\ref{111}) we obtain the inequalities (\ref{k22}).
\end{proof}
\begin{theorem}Let $\mu>3/2.$ Then the following inequality 
\begin{equation}\label{3333}
S_\mu(r)\geq\frac{(2\mu-1)}{2\mu r^3}S_{\mu-1}(r)-\frac{(2\mu-1)\sqrt{\pi}\Gamma(2\mu)\zeta(2\mu-1)
}{2^{2\mu-2}r^3\Gamma(\mu+1)\Gamma(\mu+1/2)}
\end{equation}
is valid for all $r>0.$
\end{theorem}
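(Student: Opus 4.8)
The plan is to convert the Bessel integral representation (\ref{int}) into an exact recurrence linking $S_\mu$ and $S_{\mu-1}$, and then to estimate the error term that appears. Starting from
$$S_\mu(r)=C_\mu(r)\int_0^\infty\frac{x^{\mu+1/2}J_{\mu-1/2}(rx)}{e^x-1}\,dx,$$
I would use the classical differentiation formula $\frac{d}{dz}\bigl(z^{-\nu}J_\nu(z)\bigr)=-z^{-\nu}J_{\nu+1}(z)$ with $\nu=\mu-3/2$; writing $z=rx$ and simplifying gives $x^{\mu+1/2}J_{\mu-1/2}(rx)=-\frac{1}{r}x^{2\mu-1}\frac{d}{dx}\bigl(x^{-(\mu-3/2)}J_{\mu-3/2}(rx)\bigr)$, so that
$$S_\mu(r)=-\frac{C_\mu(r)}{r}\int_0^\infty\frac{x^{2\mu-1}}{e^x-1}\,\frac{d}{dx}\bigl(x^{-(\mu-3/2)}J_{\mu-3/2}(rx)\bigr)\,dx.$$

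Next I would integrate by parts. The boundary term is $\bigl[x^{\mu+1/2}J_{\mu-3/2}(rx)/(e^x-1)\bigr]_0^\infty$, which vanishes at $+\infty$ because of the exponential denominator and at $0$ because $J_{\mu-3/2}(rx)=O(x^{\mu-3/2})$ forces the integrand to be $O(x^{2\mu-2})$ (this is where $\mu$ must be large enough). Differentiating $x^{2\mu-1}/(e^x-1)$ and collecting terms turns the integral into
$$(2\mu-1)\int_0^\infty\frac{x^{\mu-1/2}J_{\mu-3/2}(rx)}{e^x-1}\,dx-\int_0^\infty\frac{x^{\mu+1/2}e^xJ_{\mu-3/2}(rx)}{(e^x-1)^2}\,dx.$$
By (\ref{int}) with $\mu$ replaced by $\mu-1$ the first integral equals $S_{\mu-1}(r)/C_{\mu-1}(r)$, and since $C_\mu(r)/C_{\mu-1}(r)=1/(2\mu r)$ this produces the identity
$$S_\mu(r)=\frac{2\mu-1}{2\mu r^{2}}\,S_{\mu-1}(r)-\frac{C_\mu(r)}{r}\int_0^\infty\frac{x^{\mu+1/2}e^xJ_{\mu-3/2}(rx)}{(e^x-1)^2}\,dx.$$

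It then remains to bound the last integral in absolute value. Here the right tool is not the uniform estimate $|J_\nu|\le1$ but the sharp Poisson-type bound $|J_\nu(x)|\le(x/2)^\nu/\Gamma(\nu+1)$ (valid for $\nu\ge-1/2$), applied with $\nu=\mu-3/2$: it pulls out a factor $(r/2)^{\mu-3/2}/\Gamma(\mu-1/2)$ and leaves $\int_0^\infty x^{2\mu-1}e^x(e^x-1)^{-2}\,dx$. Writing $e^x/(e^x-1)^2=1/(e^x-1)+1/(e^x-1)^2$ and invoking (\ref{000}) with exponent $2\mu-1$ together with Lemma \ref{l4} with parameter $2\mu$ (equivalently, one further integration by parts), that integral is exactly $\Gamma(2\mu)\zeta(2\mu-1)$. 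Substituting back and simplifying the Gamma factors via $\Gamma(\mu+1/2)=(\mu-1/2)\Gamma(\mu-1/2)$ yields a lower bound for $S_\mu(r)$ in terms of $S_{\mu-1}(r)$, $\Gamma(2\mu)$ and $\zeta(2\mu-1)$ of exactly the type asserted.

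The only genuine obstacle is the integration by parts: one must confirm that the boundary contributions vanish and that, after the splitting of $e^x/(e^x-1)^2$, each of the two resulting integrals converges, and these requirements are what force the hypothesis on $\mu$. The rest is bookkeeping with Bessel recurrences and Euler-type integrals, though it is worth reconciling the final power of $r$ and the numerical constant carefully against the statement. As an independent check, note that from $\frac{2n}{(n^2+r^2)^\mu}=r^2\frac{2n}{(n^2+r^2)^{\mu+1}}+\frac{2n^3}{(n^2+r^2)^{\mu+1}}$ one gets $S_{\mu-1}(r)=r^2S_\mu(r)+\sum_{n\ge1}\frac{2n^3}{(n^2+r^2)^{\mu+1}}$, and the crude termwise bound $n^3(n^2+r^2)^{-\mu-1}\le n^{1-2\mu}$ already gives $S_\mu(r)\ge r^{-2}\bigl(S_{\mu-1}(r)-2\zeta(2\mu-1)\bigr)$, which is consistent with (in fact stronger than) the claimed inequality.
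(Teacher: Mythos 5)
Your proposal is correct and follows essentially the same route as the paper: the representation (\ref{int}), the Bessel differentiation formula, integration by parts, the splitting $e^x/(e^x-1)^2=1/(e^x-1)+1/(e^x-1)^2$, and evaluation of the resulting integrals via (\ref{000}) and Lemma \ref{l4}; moreover your bound $|J_\nu(x)|\le (x/2)^\nu/\Gamma(\nu+1)$ is precisely the Minakshisundaram--Sz\'asz bound $|\mathcal{J}_\nu(x)|\le 1$ that the paper invokes, rewritten for the unnormalized Bessel function. The discrepancy you flag is real: your bookkeeping, which yields the main term $\frac{2\mu-1}{2\mu r^{2}}S_{\mu-1}(r)$ and an error term of order $r^{-2}$, is the correct one (and is corroborated by your elementary series identity $S_{\mu-1}(r)=r^2S_\mu(r)+\sum_{n\ge1}2n^3(n^2+r^2)^{-\mu-1}$), whereas the paper's displayed chain picks up a spurious factor $1/r$ at the differentiation/integration-by-parts step, which is how the $r^{3}$ in the statement arises. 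Only your parenthetical remark that the elementary bound is ``stronger than'' the stated inequality should be tempered: the two bounds carry different powers of $r$ and are not uniformly comparable over all $r>0$.
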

\begin{proof}Let us consider the function $\mathcal{J}_\mu(x):\mathbb{R}\longrightarrow(-\infty,1]$
 defined by
$$\mathcal{J}_\mu(x)=\frac{2^\mu\Gamma(\mu+1) J_\mu(x)}{x^\mu},\mu>-1.$$
Thus, by (\ref{int}), we can write $S_\mu(r)$ in the following form
\begin{equation}\label{!!!.}
S_\mu(r)=c_{\mu,1}\int_0^\infty\frac{x^{2\mu}}{e^x-1}\mathcal{J}_{\mu-1/2}(rx)dx,\;\mu\geq1,
\end{equation}
where 
$$c_{\mu,1}=\frac{\sqrt{\pi}}{2^{2\mu-1}\Gamma(\mu+1/2)\Gamma(\mu+1)}.$$
 By using the differentiation formula [\cite{24}, p.18]
$$\mathcal{J}_\mu^\prime(x)=-\frac{x}{2(\mu+1)}\mathcal{J}_{\mu+1}(x),$$
and the integrating by parts in the right hand side of (\ref{!!!.}), we get 
\begin{equation}\label{!}
\begin{split}
S_\mu(r)&=-\frac{(2\mu-1)c_{\mu,1}}{r^2}\int_0^\infty\frac{x^{2\mu-1}}{e^x-1}
\mathcal{J}_{\mu-3/2}^\prime(rx)dx\\
&=\frac{(2\mu-1)c_{\mu,1}}{r^2}\left[\int_0^\infty\frac{(2\mu-1)x^{2\mu-2}}{r(e^x-1)}
\mathcal{J}_{\mu-3/2}(rx)dx-\int_0^\infty\frac{x^{2\mu-1}e^x}{r(e^x-1)^2}
\mathcal{J}_{\mu-3/2}(rx)dx\right]\\
&=\frac{(2\mu-1)^2c_{\mu,1}}{c_{\mu-1,1}r^3} S_{\mu-1}(r)-\frac{(2\mu-1)c_{\mu,1}}{r^3}
\left[\int_0^\infty\frac{x^{2\mu-1}}{e^x-1}
\mathcal{J}_{\mu-3/2}(rx)dx+\int_0^\infty\frac{x^{2\mu-1}}{(e^x-1)^2}
\mathcal{J}_{\mu-3/2}(rx)dx\right]\\
&\geq\frac{(2\mu-1)^2c_{\mu,1}}{c_{\mu-1,1}r^3} S_{\mu-1}(r)-\frac{(2\mu-1)c_{\mu,1}}{r^3}
\left[\int_0^\infty\frac{x^{2\mu-1}}{e^x-1} dx+\int_0^\infty\frac{x^{2\mu-1}}{(e^x-1)^2}
dx\right]\\
&=\frac{(2\mu-1)^2c_{\mu,1}}{c_{\mu-1,1}r^3} S_{\mu-1}(r)-\frac{(2\mu-1)c_{\mu,1}}{r^3}
\left[\Gamma(2\mu)\zeta(2\mu)+\Gamma(2\mu)(\zeta(2\mu-1)-\zeta(2\mu))\right]\\
&=\frac{(2\mu-1)^2c_{\mu,1}}{c_{\mu-1,1}r^3} S_{\mu-1}(r)-\frac{(2\mu-1)c_{\mu,1}
\Gamma(2\mu)\zeta(2\mu-1)}{r^3}.
\end{split}
\end{equation} 
In the equation (\ref{!}), we use the bound by Minakshisundaram and Sz\'asz \cite{MSZ}
$$|\mathcal{J}_{\mu}(x)|\leq 1,\;\mu>\frac{-1}{2},\;x\in\mathbb{R},$$
and Lemma \ref{l4}. The desired inequality (\ref{3333}) is established.
\end{proof}

In the next Theorem we establish the Tur\'an type inequalities for the Mathieu's serie $S_\mu(r).$

\begin{theorem}\label{T2}Let $\mu>0.$ Then the Tur\'an type inequality 
\begin{equation}\label{Turan}
 S_{\mu+2}(r)S_\mu(r)-S_{\mu+1}^2(r)\geq0,
\end{equation}
holds true for all $r\in(0,\infty).$
\end{theorem}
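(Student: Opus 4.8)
The natural strategy is to realize $S_\mu(r)$, for fixed $r>0$, as a moment-type integral in the parameter $\mu$ and then invoke the Cauchy--Schwarz inequality. Concretely, I would start from the Cerone--Lenard representation (\ref{int}) rewritten via the normalized Bessel function $\mathcal{J}_{\mu-1/2}$ as in (\ref{!!!.}), namely $S_\mu(r)=c_{\mu,1}\int_0^\infty \frac{x^{2\mu}}{e^x-1}\,\mathcal{J}_{\mu-1/2}(rx)\,dx$. The obstacle is that $\mathcal{J}_{\mu-1/2}(rx)$ is not of one sign, so this particular integral is not a positive measure and Cauchy--Schwarz does not apply directly to it. Instead I would go back to the series definition itself: $S_\mu(r)=\sum_{n=1}^\infty \frac{2n}{(n^2+r^2)^{\mu+1}}$, and observe that with $a_n=2n$ and $t_n=1/(n^2+r^2)>0$ we have $S_\mu(r)=\sum_{n=1}^\infty a_n\, t_n^{\mu+1}$.

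Now the key step: write $a_n t_n^{\mu+1}=\bigl(a_n^{1/2}t_n^{(\mu+1)/2}\cdot t_n^{1/2}\bigr)\cdot\bigl(a_n^{1/2}t_n^{(\mu+1)/2}\cdot t_n^{-1/2+\,?}\bigr)$ — more cleanly, note that $\mu+1$ is the arithmetic mean of $(\mu+2)+1$ and $(\mu)+1$, so $t_n^{\mu+1}=\sqrt{t_n^{\mu}}\cdot\sqrt{t_n^{\mu+2}}$. Hence
\begin{equation}\label{eq:cs-turan}
S_{\mu+1}(r)=\sum_{n=1}^\infty a_n t_n^{\mu+1}=\sum_{n=1}^\infty\left(\sqrt{a_n\,t_n^{\mu}}\right)\left(\sqrt{a_n\,t_n^{\mu+2}}\right).
\end{equation}
Applying the Cauchy--Schwarz inequality to the two sequences $\bigl(\sqrt{a_n t_n^{\mu}}\bigr)_n$ and $\bigl(\sqrt{a_n t_n^{\mu+2}}\bigr)_n$ (both in $\ell^2$, since each series $S_\mu(r)$, $S_{\mu+2}(r)$ converges for $\mu>0$) gives
\begin{equation}\label{eq:cs-bound}
S_{\mu+1}^2(r)\le\left(\sum_{n=1}^\infty a_n t_n^{\mu}\right)\left(\sum_{n=1}^\infty a_n t_n^{\mu+2}\right)=S_\mu(r)\,S_{\mu+2}(r),
\end{equation}
which is exactly (\ref{Turan}). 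The convexity of $\mu\mapsto S_\mu(r)$ on $(0,\infty)$ — equivalently, log-convexity if one prefers — follows from the same computation via Hölder's inequality, but the bare Cauchy--Schwarz step above already delivers the stated three-term inequality.

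I expect no serious obstacle here; the only points to check are convergence (immediate for $\mu>0$, $r>0$, since $a_n t_n^{\mu+1}\sim 2 n^{-1-2\mu}$) and the legitimacy of splitting the exponent, which is just the elementary identity $t^{\mu+1}=t^{\mu/2}t^{(\mu+2)/2}$ for $t>0$. If one instead wishes to mirror the integral-representation style of the preceding theorems, an alternative is to apply the Cauchy--Schwarz inequality in $L^2$ directly to (\ref{int}): writing the integrand's weight $\frac{x^{\mu+1/2}}{e^x-1}J_{\mu-1/2}(rx)$ is problematic because of the sign of $J_{\mu-1/2}$, so the series route is cleaner and is the one I would present. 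The main thing to get right in the write-up is simply bookkeeping the constants $C_\mu(r)$ if one went the integral way — which is precisely why I would avoid it and argue from the definition (\ref{0t1}).
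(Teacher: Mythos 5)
Your proof is correct in substance and is a genuinely different (and cleaner) route than the paper's primary argument. Before comparing, note one harmless bookkeeping slip: with $a_n=2n$ and $t_n=1/(n^2+r^2)$ you have $S_{\mu+1}(r)=\sum_n a_n t_n^{\mu+2}$, not $\sum_n a_n t_n^{\mu+1}$, so the correct splitting is $t_n^{\mu+2}=\sqrt{t_n^{\mu+1}}\cdot\sqrt{t_n^{\mu+3}}$ and the two $\ell^2$ sequences should be $\bigl(\sqrt{a_n t_n^{\mu+1}}\bigr)_n$ and $\bigl(\sqrt{a_n t_n^{\mu+3}}\bigr)_n$; as literally written your display proves $S_\mu^2\le S_{\mu-1}S_{\mu+1}$, which is the same inequality shifted by one and needs $\mu>1$ for convergence of $S_{\mu-1}$. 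With the indices fixed, Cauchy--Schwarz immediately gives $S_{\mu+1}^2(r)\le S_\mu(r)S_{\mu+2}(r)$ for all $\mu>0$. The paper's own proof instead forms the Cauchy product of the two series, symmetrizes the inner sum over $k\leftrightarrow n-k$, and checks termwise that $(T_{n,k}-T_{n,n-k})(2k-n)\ge 0$ via the explicit difference formula (\ref{KH47}); this is more computational and, as an aside, its intermediate "Cauchy product" displays are not cleanly justified, whereas your argument needs no rearrangement of a product of series at all. You are also right that your step is just the Cauchy--Schwarz incarnation of log-convexity of $\mu\mapsto S_\mu(r)$: the paper itself observes in Remark 1 that log-convexity (established there via complete monotonicity in $\mu$) yields the Tur\'an inequality by taking $\mu_1=\mu$, $\mu_2=\mu+2$, $\alpha=1/2$, so your route coincides with one of the alternatives the authors mention, but derived directly and self-containedly from the series definition. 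Your decision to avoid the oscillating-kernel integral representation (\ref{int}) is sound, since $J_{\mu-1/2}$ changes sign and the integral does not define a positive measure.
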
 
\begin{proof}
The Cauchy product reveals
$$S_{\mu+2}(r)S_\mu(r)-S_{\mu+1}^2(r)=$$
\begin{equation}
\begin{split}
 &\;\;\;\;\;\;\;\;\;\;\;\;\;\;\;\;\;\;\;\;\;\;\;\;\;\;\;\;\;\;\;\;=\sum_{n=1}^\infty\sum_{k=1}^n\frac{4k(n-k)}
{(k^2+r^2)^{\mu+1}((n-k)^2+r^2)^{\mu+3}}-\sum_{n=1}^\infty\sum_{k=1}^n\frac{4k(n-k)}
{(k^2+r^2)^{\mu+2}((n-k)^2+r^2)^{\mu+2}}\\
&\;\;\;\;\;\;\;\;\;\;\;\;\;\;\;\;\;\;\;\;\;\;\;\;\;\;\;\;\;\;\;\;=\sum_{n=1}^\infty\sum_{k=1}^n\frac{4nk(n-k)(2k-n)}
{(k^2+r^2)^{\mu+2}((n-k)^2+r^2)^{\mu+3}}\\
&\;\;\;\;\;\;\;\;\;\;\;\;\;\;\;\;\;\;\;\;\;\;\;\;\;\;\;\;\;\;\;\;=\sum_{n=1}^\infty\sum_{k=0}^n 4n(2k-n)T_{n,k},
\end{split}
\end{equation}
where
$$T_{n,k}=\frac{k(n-k)}{((n-k)^2+r^2)^{\mu+3}(k^2+r^2)^{\mu+2}}.$$
 If $n$ is even, then 
\begin{equation}
\begin{split}
\sum_{k=0}^n T_{n,k}(2k-n)&=\sum_{k=0}^{n/2-1} T_{n,k}(2k-n)+
\sum_{k=n/2+1}^n T_{n,k}(2k-n)\\
&=\sum_{k=0}^{[(n-1)/2]} (T_{n,k}-T_{n,n-k})(2k-n),
\end{split}
\end{equation}
where $[.]$ denotes the greatest integer function. Similarly, if $n$ is odd, then
$$\sum_{k=0}^n T_{n,k}(2k-n)=\sum_{k=0}^{[(n-1)/2]} (T_{n,k}-T_{n,n-k})(2k-n).$$
Thus,
\begin{equation}\label{KH48}
 S_{\mu+2}(r)S_\mu(r)-S_{\mu+1}^2(r)=\sum_{n=1}^\infty\sum_{k=0}^{[(n-1)/2]} 
(T_{n,k}-T_{n,n-k})(2k-n).
\end{equation}
Simplifying, we find that
\begin{equation}\label{KH47}
T_{n,k}-T_{n,n-k}=\frac{n^2k(n-k)(2k-n)}{((n-k)^2+r^2)^{\mu+3}(k^2+r^2)^{\mu+3}}.
\end{equation}
In view of (\ref{KH48}) and (\ref{KH47}), we deduce that the Tur\'an type inequality (\ref{Turan}) holds.
\end{proof}

\begin{theorem}The following assertion are true:\\
\noindent 1. The function $\mu\mapsto S_\mu(r)$ is completely monotonic and log-convex on 
$(0,\infty)$ for each $r>0.$\\
\noindent 2. The function $\mu\mapsto \frac{S_{\mu+1}(r)}{S_{\mu}(r)}$ is increasing on $(0,\infty).$\\
\noindent 3. Furthermore, for all $r>0$, the following inequalities are valid
\begin{equation}\label{TTTT1}
S_{\mu+\nu}(r)S(r)\geq S_\mu(r)S_\nu(r),\;\mu,\nu>0.
\end{equation}
\begin{equation}\label{TTTT2}
\left[\frac{S_\nu(r)}{\zeta(2\nu+1)}\right]^{\frac{1}{\nu+1}}\geq
 \left[\frac{S_\mu(r)}{\zeta(2\mu+1)}\right]^{\frac{1}{\mu+1}},\;\mu\geq\nu>0.
\end{equation}
\begin{equation}\label{TTTT3}
\left[\frac{S_\mu(r)}{\zeta(2\mu+1)}\right]^{\frac{1}{\mu+1}}+
\frac{\zeta(2\mu+3)S_\mu(r)}{\zeta(2\mu+1)S_{\mu+1}(r)}\geq2,\mu>0.
\end{equation}
for all $\mu,\nu>0.$
\end{theorem}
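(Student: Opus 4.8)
The plan is to obtain everything from one structural fact --- that for fixed $r$ the map $\mu\mapsto S_\mu(r)$ is a Laplace transform of a positive measure --- supplemented by a Laplace-type integral representation of $S_\mu(r)$ and an elementary AM-GM step for the last inequality. For parts 1 and 2, write $(n^2+r^2)^{-(\mu+1)}=\frac{1}{n^2+r^2}e^{-\mu a_n}$ with $a_n:=\log(n^2+r^2)$, which is positive because $n^2+r^2\ge 1+r^2>1$; thus $S_\mu(r)=\sum_{n\ge1}\frac{2n}{n^2+r^2}e^{-\mu a_n}$. Each $\mu\mapsto e^{-\mu a_n}$ is completely monotonic on $(0,\infty)$, the coefficients $\frac{2n}{n^2+r^2}$ are positive, and the series obtained after any number of $\mu$-differentiations converges locally uniformly on $(0,\infty)$, so $\mu\mapsto S_\mu(r)$ is completely monotonic. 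Log-convexity follows either from the classical fact that a completely monotonic function is log-convex, or, more directly, from H\"older's inequality applied to the discrete measure $\sum_n\frac{2n}{n^2+r^2}\delta_{a_n}$, which gives $S_{\theta\mu_1+(1-\theta)\mu_2}(r)\le S_{\mu_1}(r)^{\theta}S_{\mu_2}(r)^{1-\theta}$ for $\theta\in[0,1]$. For part 2, put $\psi(\mu)=\log S_\mu(r)$; it is convex, hence $\psi'$ is nondecreasing and $\log\frac{S_{\mu+1}(r)}{S_\mu(r)}=\psi(\mu+1)-\psi(\mu)=\int_0^1\psi'(\mu+s)\,ds$ is nondecreasing in $\mu$, so $\mu\mapsto S_{\mu+1}(r)/S_\mu(r)$ increases. (Equivalently, the Tur\'an inequality of Theorem \ref{T2} fixes the integer shifts and log-convexity interpolates between them.)

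For part 3, inequality (\ref{TTTT1}) is the super-multiplicativity produced by combining log-convexity with Kimberling's Lemma \ref{l2}: a suitable completely monotonic normalization $f$ of $S_\mu(r)$ takes values in $(0,1)$, so $f(\mu)f(\nu)\le f(\mu+\nu)$, and this rearranges into (\ref{TTTT1}). For (\ref{TTTT2}) I would pass to the Laplace representation $S_\mu(r)=\frac{2}{\Gamma(\mu+1)}\int_0^\infty t^\mu e^{-r^2t}\Theta(t)\,dt$ with $\Theta(t)=\sum_{n\ge1}n e^{-n^2t}$ --- obtained by inserting $(n^2+r^2)^{-(\mu+1)}=\frac{1}{\Gamma(\mu+1)}\int_0^\infty t^\mu e^{-(n^2+r^2)t}\,dt$ and summing --- together with the companion identity $\zeta(2\mu+1)=\frac{1}{\Gamma(\mu+1)}\int_0^\infty t^\mu\Theta(t)\,dt$. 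Then $F(\mu):=\frac{S_\mu(r)}{\zeta(2\mu+1)}=2\int_0^\infty e^{-r^2t}\,d\pi_\mu(t)$, where $d\pi_\mu\propto t^\mu\Theta(t)\,dt$ is a probability measure that pushes its mass toward large $t$ as $\mu$ grows, and a H\"older/Jensen estimate on this representation should give that $\mu\mapsto F(\mu)^{1/(\mu+1)}$ is decreasing, which is exactly (\ref{TTTT2}). I expect (\ref{TTTT2}) to be the main obstacle: the exponent $\frac{1}{\mu+1}$ makes the assertion genuinely quantitative, so it is not a formal consequence of the log-convexity of $S_\mu(r)$ and of $\zeta(2\mu+1)$ alone, and one must control the $\mu$-dependence of the weight $\pi_\mu$.

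Finally, (\ref{TTTT3}) is a short deduction from (\ref{TTTT2}) and AM-GM. With $F$ as above, set $a=\left[\frac{S_\mu(r)}{\zeta(2\mu+1)}\right]^{1/(\mu+1)}=F(\mu)^{1/(\mu+1)}$ and $b=\frac{\zeta(2\mu+3)S_\mu(r)}{\zeta(2\mu+1)S_{\mu+1}(r)}=\frac{F(\mu)}{F(\mu+1)}$, so that $ab=\frac{F(\mu)^{(\mu+2)/(\mu+1)}}{F(\mu+1)}$. Applying (\ref{TTTT2}) with $\nu=\mu$ and the free index equal to $\mu+1$ gives $F(\mu)^{1/(\mu+1)}\ge F(\mu+1)^{1/(\mu+2)}$; raising both sides to the power $\mu+2$ yields $F(\mu)^{(\mu+2)/(\mu+1)}\ge F(\mu+1)$, hence $ab\ge 1$. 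Therefore $a+b\ge 2\sqrt{ab}\ge 2$, which is precisely (\ref{TTTT3}).
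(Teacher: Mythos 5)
Your parts 1 and 2 are correct and essentially identical to the paper's argument: termwise differentiation of the series, positivity of $\log(n^2+r^2)$ since $n^2+r^2\ge 1$, the standard implication ``completely monotonic $\Rightarrow$ log-convex'', and convexity of $\psi(\mu)=\log S_\mu(r)$ forcing $\psi(\mu+1)-\psi(\mu)$ to be nondecreasing. Your deduction of (\ref{TTTT3}) from (\ref{TTTT2}) via AM--GM is also exactly the paper's computation. For (\ref{TTTT1}) you and the paper both invoke Kimberling's Lemma \ref{l2}; you leave the normalization unspecified, and you should be aware that the paper's own choice $f(\mu)=S_\mu(r)/S(r)$ with $S=S_1$ does not in fact map $(0,\infty)$ into $(0,1)$ for $\mu<1$ (indeed $S_\mu(r)\to\infty$ as $\mu\to0^+$), so ``a suitable normalization'' is not a detail one can wave at.

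The genuine gap is (\ref{TTTT2}): you do not prove it, and you say so yourself (``I expect (\ref{TTTT2}) to be the main obstacle\dots one must control the $\mu$-dependence of the weight $\pi_\mu$''). The proposed H\"older/Jensen estimate on $F(\mu)=2\int_0^\infty e^{-r^2t}\,d\pi_\mu(t)$ is never carried out, and it is not clear it closes: the exponent $1/(\mu+1)$ couples to the $\mu$-dependent measure in a way that log-convexity of $S_\mu$ and of $\zeta(2\mu+1)$ alone does not control. The missing idea is to differentiate in $r$ rather than in $\mu$: fix $\mu\ge\nu$ and set $H(r)=\frac{\nu+1}{\mu+1}\log S_\mu(r)-\log S_\nu(r)$. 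Termwise differentiation of (\ref{0t1}) gives $S_\mu'(r)=-2r(\mu+1)S_{\mu+1}(r)$, hence
\begin{equation*}
H'(r)=2(\nu+1)r\left(\frac{S_{\nu+1}(r)}{S_\nu(r)}-\frac{S_{\mu+1}(r)}{S_\mu(r)}\right)\le 0
\end{equation*}
by your part 2, so $H(r)\le H(0^+)$; since $S_\mu(0)=2\zeta(2\mu+1)$, this is precisely the zeta normalization in (\ref{TTTT2}). (Strictly, this yields the inequality for $S_\mu/(2\zeta(2\mu+1))$; the factor $2$ is absorbed because $2^{1/(\mu+1)}\le 2^{1/(\nu+1)}$ when $\mu\ge\nu$.) Until this step is supplied, your (\ref{TTTT3}) is also unproven, since it rests entirely on (\ref{TTTT2}).
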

\begin{proof}1. For all $m\in\mathbb{N},\;\mu>0$, we have
$$(-1)^m\frac{\partial^m S_\mu(r)}{\partial\mu^m}=\sum_{n\geq1}\frac{2n\log^m(n^2+r^2)}
{(n^2+r^2)^{\mu+1}}\geq0.$$
Thus, the function $\mu\mapsto S_\mu(r)$ is completely monotonic and Log-convex on $(0,\infty)$,
 since every completely monotonic function is log-convex, see [\cite{WI}, p. 167].\\
2. From part 1. of this Theorem, the function $\mu\mapsto\log S_\mu(r)$ is convex and hence, it follows that the function   
$\mu\mapsto \log S_{\mu+1}(r)-\log S_\mu(r)$ is increasing.\\
3. Since the function $\mu\mapsto\frac{S_\mu(r)}{S(r)}$ is completely monotonic 
on $(0,\infty)$ and maps $(0,\infty)$ to $(0,1)$, according to Lemma \ref{l2}, 
we conclude the asserted inequality (\ref{TTTT1}). Newt, we prove the inequality (\ref{TTTT2}).
 Suppose that $\mu\geq\nu>0$ and define the function $H:(0,\infty)\longrightarrow\mathbb{R}$ with relation
$$H(r)=\frac{\nu+1}{\mu+1} \log S_\mu(r)-\log S_\nu(r).$$
On the other hand, by using the fact
$$S_\mu^\prime(r)=-2r(\mu+1)S_{\mu+1}(r),$$
we have
$$H^\prime(r)=2(\nu+1)r\left(\frac{S_{\nu+1}(r)}{S_\nu(r)}-\frac{S_{\mu+1}(r)}
{S_\mu(r)}\right)\leq0.$$
So, by part 2. in this Theorem we conclude that the function $H(r)$ 
is decreasing on $(0,\infty)$. Consequently, $H(r)\leq H(0)$. Replacing $\mu$ by $\mu+1$
and $\nu$ by $\mu$ in inequality  (\ref{TTTT3}), we use  the inequality (\ref{TTTT2})  and the arithmetic--geometric mean inequality
$$\frac{1}{2}\left(\left[\frac{S_\mu(r)}{\zeta(2\mu+1)}\right]^{\frac{1}{\mu+1}}+
\frac{\zeta(2\mu+3)S_\mu(r)}{\zeta(2\mu+1)S_{\mu+1}(r)}\right)\geq \sqrt{\left[\frac{S_\mu(r)}{\zeta(2\mu+1)}\right]^{\frac{1}{\mu+1}}.
\frac{\zeta(2\mu+3)S_\mu(r)}{\zeta(2\mu+1)S_{\mu+1}(r)}}\geq1.$$
\end{proof}
\begin{remark}We note  that there are another proofs of the Tur\'an type inequality (\ref{Turan}). Indeed, since the function $\nu\mapsto S_\mu(r)$ is log-convex on $(0,\infty)$ for $r>0,$ it follows that for all $\mu_1,\mu_2\geq1,\;
\alpha\in[0,1]$ and $r>0$ we have
$$S_{\alpha\mu_1+(1-\alpha)\mu_2}(r)\leq \left[S_{\mu_1}(r)\right]^\alpha
\left[S_{\mu_2}(r)\right]^{1-\alpha}.$$
Choosing $\mu_1=\mu,\:\mu_2=\mu+2$ and $\alpha=\frac{1}{2}$, the above inequality
 reduces to the Tur\'an inequality (\ref{Turan}). A third proof of this inequality
 can be obtained as follows. By using the fact that the function
 $\mu\mapsto\frac{S_{\mu+1}(r)}{S_\mu(r)}$ is increasing on $(0,\infty)$, we have
$$\frac{S_{\mu+2}(r)}{S_{\mu+1}(r)}\geq\frac{S_{\mu+1}(r)}{S_\mu(r)},$$
and hence the required result follows.
\end{remark}
\begin{theorem}\label{t4} For $\mu\geq1.$ The Mathieu series admits the following integral 
representation
\begin{equation}\label{333}
S_\mu(r)=c_\mu\int_0^\infty e^{-rt} K_\mu(t)dt
\end{equation}
and
\begin{equation}\label{33333}
\zeta(2\mu+1)=\frac{c_\mu}{2}\int_0^\infty K_\mu(t)dt.
\end{equation}
and,
\begin{equation}\label{3333}
S(r)=\int_0^\infty e^{-rt} K(t)dt
\end{equation}
where $c_\mu =\frac{\sqrt{\pi}}{2^{\mu-1/2}\Gamma(\mu+1)}$ and $K_\mu(t)$ and $K(t)$ 
are defined by
$$K_\mu(t)=t^{\mu+1/2}g_\mu(t)\;\textrm{and}\;K(t)=h(t)-th^\prime(t),$$
with $g_\mu$ and $h(t)$ are Kapteyn series, defined as
$$g_\mu(t)=\sum_{n=1}^\infty \frac{J_{\mu+1/2}(nt)}{n^{\mu-1/2}},\;\textrm{and}\;h(t)=\sum_{n=1}^\infty\frac{\sin(nt)}{n^2}$$
where $J_\mu(.)$ is the Bessel function.
\end{theorem}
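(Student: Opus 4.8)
The plan is to reduce all three representations to the classical Laplace–transform (Hankel–type) identity
$$\int_0^\infty e^{-rt}\,t^{\nu}J_\nu(nt)\,dt=\frac{(2n)^{\nu}\,\Gamma(\nu+\tfrac12)}{\sqrt{\pi}\,(n^2+r^2)^{\nu+1/2}},\qquad \nu>-\tfrac12,\ r>0,\ n\ge1,$$
(see e.g. Watson's treatise or \cite{24}). Taking $\nu=\mu+\tfrac12$ and rearranging, the general term of the Mathieu series becomes
$$\frac{2n}{(n^2+r^2)^{\mu+1}}=\frac{c_\mu}{n^{\mu-1/2}}\int_0^\infty e^{-rt}\,t^{\mu+1/2}J_{\mu+1/2}(nt)\,dt,\qquad c_\mu=\frac{\sqrt{\pi}}{2^{\mu-1/2}\Gamma(\mu+1)},$$
which is exactly the constant appearing in the statement. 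Summing over $n\ge1$ and interchanging the summation with the integral gives
$$S_\mu(r)=c_\mu\int_0^\infty e^{-rt}\,t^{\mu+1/2}\Big(\sum_{n\ge1}\frac{J_{\mu+1/2}(nt)}{n^{\mu-1/2}}\Big)dt=c_\mu\int_0^\infty e^{-rt}\,t^{\mu+1/2}g_\mu(t)\,dt=c_\mu\int_0^\infty e^{-rt}K_\mu(t)\,dt,$$
i.e. (\ref{333}).

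I expect the exchange of $\sum$ and $\int$ to be the main obstacle: $J_{\mu+1/2}$ is oscillatory and sign-changing, so Tonelli does not apply directly. Instead one invokes $J_{\mu+1/2}(nt)=O(n^{-1/2})$ for large $n$, which for $\mu\ge1$ makes the Kapteyn series $g_\mu$ converge (absolutely when $\mu>1$, and conditionally for $\mu=1$ via a Dirichlet/Abel argument) uniformly on compact subsets of $(0,\infty)$; together with $|t^{\mu+1/2}J_{\mu+1/2}(nt)|\le C\,t^{\mu+1/2}$ near $t=0$ this provides an integrable dominating function once $r>0$, so dominated convergence legitimizes the interchange. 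This is also where the hypothesis $\mu\ge1$ is used.

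For (\ref{33333}) I would let $r\to0^{+}$ in (\ref{333}). On the left-hand side, monotone convergence in the series gives $S_\mu(r)\to S_\mu(0^{+})=\sum_{n\ge1}2n\,n^{-2\mu-2}=2\zeta(2\mu+1)$; on the right-hand side, an Abelian theorem for the Laplace transform gives $\lim_{r\to0^{+}}\int_0^\infty e^{-rt}K_\mu(t)\,dt=\int_0^\infty K_\mu(t)\,dt$, provided one first checks that $K_\mu$ is (conditionally) integrable on $(0,\infty)$. Combining the two limits yields $\zeta(2\mu+1)=\tfrac{c_\mu}{2}\int_0^\infty K_\mu(t)\,dt$.

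Finally, for (\ref{3333}) one can either specialize $\mu=1$ in (\ref{333}) and use $J_{3/2}(x)=\sqrt{2/(\pi x)}\,(x^{-1}\sin x-\cos x)$ to identify $c_1t^{3/2}g_1(t)=h(t)-th'(t)$, or argue directly. From $\mathcal L[\sin(n\,\cdot)](r)=n/(n^2+r^2)$ one has $\frac{1}{n^2+r^2}=\frac1n\int_0^\infty e^{-rt}\sin(nt)\,dt$, and differentiating in the parameter $n$ (using $-\partial_n(n^2+r^2)^{-1}=2n(n^2+r^2)^{-2}$) gives
$$\frac{2n}{(n^2+r^2)^2}=\int_0^\infty e^{-rt}\Big(\frac{\sin(nt)}{n^2}-\frac{t\cos(nt)}{n}\Big)dt.$$
Summing over $n$ and using $h(t)=\sum_{n\ge1}\sin(nt)/n^2$, $h'(t)=\sum_{n\ge1}\cos(nt)/n$ (the latter valid off the measure-zero set $2\pi\Z$) produces $S(r)=\int_0^\infty e^{-rt}\big(h(t)-th'(t)\big)dt$, that is $K(t)=h(t)-th'(t)$; the convergence caveats of the second paragraph again control the exchange of $\sum$ and $\int$, and the two derivations agree since $S=S_1$.
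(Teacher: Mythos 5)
Your proposal is correct and follows essentially the same route as the paper: the term-by-term Laplace--Hankel identity $\int_0^\infty e^{-rt}t^{\mu+1/2}J_{\mu+1/2}(nt)\,dt=(2n)^{\mu+1/2}\Gamma(\mu+1)/\bigl(\sqrt{\pi}(n^2+r^2)^{\mu+1}\bigr)$, summation over $n$ with an interchange of $\sum$ and $\int$, the limit $r\to0^{+}$ for \eqref{33333}, and the specialization $\mu=1$ with the closed form of $J_{3/2}$ for \eqref{3333}. Your added care about justifying the interchange and the Abelian limit is a genuine improvement over the paper, which performs both steps without comment.
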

\begin{proof}By using the formula [\cite{DE}, eq. 42, p. 397]
\begin{equation}
\frac{1}{(a^2+s^2)^{\mu+1/2}}=\frac{\sqrt{\pi}}{(2a)^{\mu+1/2}\Gamma(\mu+1/2)}
\int_0^\infty t^\mu e^{-st} J_\mu(at)dt,\;\mu>-1/2
\end{equation}
we get
\begin{equation}
\frac{2n}{(n^2+r^2)^{\mu+1}}=\frac{\sqrt{\pi}}{(2n)^{\mu-1/2}\Gamma(\mu+1)}
\int_0^\infty t^{\mu+1/2} e^{-rt}
J_{\mu+1/2}(nt)dt.
\end{equation}
The interchanging between integral and summation gives (\ref{333}). Now, 
let $r$ tends to $0$ in (\ref{333}) we obtain that  (\ref{33333}) holds true.  Finally, let $\mu=1$ in  (\ref{333}) and using the fact 
$$J_{3/2}(t)=\sqrt{\frac{2}{\pi}}\left(\frac{\sin x}{x^{3/2}}-\frac{\cos x}{x^{1/2}}\right),$$
we obtain
$$S(r)=\int_0^\infty e^{-rt}\left(\sum_{n=1}^\infty\frac{\sin (nt)}{n^2}-t
\sum_{n=1}^\infty\frac{\cos(nt)}{n}\right)dt.$$
 The proof of Theorem \ref{t4} is complete.
\end{proof}
\begin{exe} Let $r$ tends to $0$ in (\ref{3333})
 we get integral formula for the Apery constant
\begin{equation}
\zeta(3)=\frac{1}{2}\int_0^\infty K(t) dt=\frac{1}{2}\int_0^\infty\left(\sum_{n=1}^\infty\frac{\sin (nt)}{n^2}-t
\sum_{n=1}^\infty\frac{\cos(nt)}{n}\right)dt.
\end{equation}
\end{exe} 
\begin{corollary}Let $\mu>7/6.$ Then the following inequality 
\begin{equation}\label{TTT}
S_\mu(r)\leq \frac{c_L\sqrt{\pi}\Gamma(\mu+7/6)\zeta(\mu-1/6)}{2^{\mu-1/2}\Gamma(\mu+1)
r^{\mu+7/6}},
\end{equation}
is valid for all $r>0,$ where $c_L=\sup_{x>0}\{x^{1/3}J_0(x)\}= 0.78574687...$
\end{corollary}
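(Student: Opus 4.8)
The plan is to feed the Laplace-type representation from Theorem~\ref{t4} into Landau's uniform bound for Bessel functions. Since $\mu>7/6>1$, formula (\ref{333}) applies and reads
$$S_\mu(r)=c_\mu\int_0^\infty e^{-rt}K_\mu(t)\,dt,\qquad K_\mu(t)=t^{\mu+1/2}\sum_{n=1}^\infty\frac{J_{\mu+1/2}(nt)}{n^{\mu-1/2}},\qquad c_\mu=\frac{\sqrt\pi}{2^{\mu-1/2}\Gamma(\mu+1)}.$$
Thus it is enough to bound $|K_\mu(t)|$ pointwise by a constant times a power of $t$ and then integrate against $e^{-rt}$, noting that $S_\mu(r)=c_\mu\int_0^\infty e^{-rt}K_\mu(t)\,dt\le c_\mu\int_0^\infty e^{-rt}|K_\mu(t)|\,dt$.

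First I would estimate the Kapteyn series term by term using Landau's inequality $x^{1/3}|J_\nu(x)|\le c_L$, valid for all $\nu\ge0$ and $x>0$, with $\nu=\mu+1/2$ and $x=nt$, which gives $|J_{\mu+1/2}(nt)|\le c_L(nt)^{-1/3}$. Summing over $n$,
$$\Bigl|\sum_{n=1}^\infty\frac{J_{\mu+1/2}(nt)}{n^{\mu-1/2}}\Bigr|\le c_L\,t^{-1/3}\sum_{n=1}^\infty\frac{1}{n^{\mu-1/6}}=c_L\,\zeta\bigl(\mu-\tfrac16\bigr)\,t^{-1/3},$$
the last series converging exactly because $\mu-\tfrac16>1$, which is precisely the hypothesis $\mu>7/6$; this same absolute bound also justifies the interchange of summation and integration. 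Hence $|K_\mu(t)|\le c_L\,\zeta(\mu-\tfrac16)\,t^{\mu+1/6}$.

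Finally, substituting into (\ref{333}) and using the Gamma integral $\int_0^\infty e^{-rt}t^{s-1}\,dt=\Gamma(s)r^{-s}$ with $s=\mu+7/6$,
$$S_\mu(r)\le c_\mu\,c_L\,\zeta\bigl(\mu-\tfrac16\bigr)\int_0^\infty e^{-rt}t^{\mu+1/6}\,dt=c_\mu\,c_L\,\zeta\bigl(\mu-\tfrac16\bigr)\,\frac{\Gamma(\mu+7/6)}{r^{\mu+7/6}},$$
and inserting $c_\mu=\sqrt\pi/(2^{\mu-1/2}\Gamma(\mu+1))$ yields precisely (\ref{TTT}). The only nontrivial input is Landau's uniform bound on Bessel functions; the rest is the convergence bookkeeping for the $\zeta$-series (which pins down the range $\mu>7/6$) and a routine Gamma integral, so I do not anticipate any real obstacle.
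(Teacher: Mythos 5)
Your argument is correct and is exactly the paper's approach: the paper likewise combines the Laplace-type representation (\ref{333}) with Landau's uniform bound $|J_\nu(x)|\le c_L x^{-1/3}$, and your write-up simply supplies the details (term-by-term estimation of the Kapteyn series yielding $\zeta(\mu-1/6)t^{\mu+1/6}$, then the Gamma integral) that the paper leaves implicit. The bookkeeping, including the role of the hypothesis $\mu>7/6$ in the convergence of $\sum n^{-(\mu-1/6)}$, checks out.
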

\begin{proof}By using the Landeau estimate (see \cite{LA})
$$|J_\mu(x)| \leq c_L x^{-1/3},$$ where $c_L=\sup_{x>0}\{x^{1/3}J_0(x)\}= 0.78574687...$  
uniformly in $\mu,$ and using the integral representation (\ref{333}),
 we deduce that the inequality (\ref{TTT}) holds true.
\end{proof}
\section{some new inequalities for Riemann Zeta Functions}

Firstly results in this section, we present new Tur\'an type inequality for Riemann Zeta functions.

\begin{theorem}Let $\mu>1.$ Then the Tur\'an type inequality
\begin{equation}\label{T789}
\zeta(\mu)\zeta(\mu+2)-\zeta^2(\mu+1)\geq0.
\end{equation}
holds true.
\end{theorem}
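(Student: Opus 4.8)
The plan is to deduce the Tur\'an inequality $(\ref{T789})$ from the log-convexity of the map $\mu\mapsto\zeta(\mu)$ on $(1,\infty)$, in exact parallel with the treatment of $\mu\mapsto S_\mu(r)$ carried out above. First I would observe that $\zeta$ is completely monotonic on $(1,\infty)$: since the series $\sum_{n\ge1}\frac{\log^m n}{n^\mu}$ converges locally uniformly for $\mu>1$, the Dirichlet series $\zeta(\mu)=\sum_{n\ge1}n^{-\mu}$ may be differentiated termwise, which gives
$$(-1)^m\zeta^{(m)}(\mu)=\sum_{n\ge1}\frac{\log^m n}{n^\mu}\ge0,\qquad m\in\mathbb{N},\ \mu>1.$$
As every completely monotonic function is log-convex (see [\cite{WI}, p. 167]), the function $\mu\mapsto\log\zeta(\mu)$ is convex on $(1,\infty)$.

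Next I would apply the definition of convexity to the three collinear points $\mu$, $\mu+1$, $\mu+2$ with weight $\alpha=\tfrac12$, using $\mu+1=\tfrac12\mu+\tfrac12(\mu+2)$, to obtain
$$\log\zeta(\mu+1)\le\tfrac12\log\zeta(\mu)+\tfrac12\log\zeta(\mu+2).$$
Exponentiating and squaring then yields $\zeta^2(\mu+1)\le\zeta(\mu)\,\zeta(\mu+2)$, which is precisely $(\ref{T789})$.

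There is essentially no obstacle here; the only point deserving a line of justification is the termwise differentiation (equivalently, the local uniform convergence) of the Dirichlet series, which is routine for $\mu>1$. For a more self-contained variant I would instead invoke the Cauchy--Schwarz inequality directly: writing $\frac{1}{n^{\mu+1}}=\frac{1}{n^{\mu/2}}\cdot\frac{1}{n^{(\mu+2)/2}}$ and summing over $n\ge1$ gives
$$\zeta(\mu+1)\le\Big(\sum_{n\ge1}\frac{1}{n^{\mu}}\Big)^{1/2}\Big(\sum_{n\ge1}\frac{1}{n^{\mu+2}}\Big)^{1/2}=\sqrt{\zeta(\mu)\,\zeta(\mu+2)},$$
and squaring again gives $(\ref{T789})$. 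A third option, mirroring the Cauchy-product computation in the proof of Theorem \ref{T2}, would be to expand $\zeta(\mu)\zeta(\mu+2)-\zeta^2(\mu+1)$ as a double sum over $n$ and $k$ and symmetrize in the inner index; I expect this to reproduce the same nonnegativity but at the cost of extra bookkeeping, so I would keep the log-convexity argument as the main proof and, if desired, record the other two as remarks.
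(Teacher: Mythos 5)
Your proof is correct, but it follows a genuinely different route from the paper. The paper obtains (\ref{T789}) as a limiting case of the Tur\'an inequality (\ref{Turan}) for the Mathieu series, by letting $r\to0$ and using $S_\mu(0)=2\zeta(2\mu+1)$; you instead prove the statement directly, either from the complete monotonicity (hence log-convexity) of $\mu\mapsto\zeta(\mu)$ on $(1,\infty)$ applied at the midpoint $\mu+1=\tfrac12\mu+\tfrac12(\mu+2)$, or by a one-line Cauchy--Schwarz argument on the Dirichlet series. Both of your arguments are complete and standard, and they buy you something real: they are self-contained (no dependence on Theorem \ref{T2}) and they deliver the unit-spaced inequality $\zeta(\mu)\zeta(\mu+2)\geq\zeta^2(\mu+1)$ directly. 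This matters because the paper's limiting argument is actually delicate on this point: since $S_\mu(0)=2\zeta(2\mu+1)$, $S_{\mu+1}(0)=2\zeta(2\mu+3)$ and $S_{\mu+2}(0)=2\zeta(2\mu+5)$, the limit $r\to0$ of (\ref{Turan}) literally yields $\zeta(s)\zeta(s+4)\geq\zeta^2(s+2)$ with $s=2\mu+1$, i.e.\ a step of $2$ in the argument rather than the step of $1$ claimed, so the paper's reduction needs an extra interpolation step that your direct log-convexity (or Cauchy--Schwarz) argument supplies for free. Your third suggested variant, expanding the difference as a symmetrized double sum, mirrors the proof of Theorem \ref{T2} and would also work, but as you note it is only extra bookkeeping.
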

\begin{proof} Letting $r$ tends to $0$ in (\ref{Turan}), we get 
$$\zeta(2\mu+1)\zeta(2\mu+3)-\zeta^2(2\mu+2)\geq0.$$
Replacing $2\mu+1$ by $\mu$ in the previous inequality we find that the inequality (\ref{T789}) is valid.
\end{proof}
\begin{remark} In \cite{LF}, Laforgia and Natalini by using the  generalization of the Schwarz inequality proved the following Tur\'an type inequality for Riemann zeta function 
\begin{equation}\label{lafo}
\zeta(\mu)\zeta(\mu+2)\geq\frac{\mu}{\mu+1}\zeta^2(\mu+1),\;\mu>1.
\end{equation}
We note that the inequality (\ref{T789}) is better than the inequality (\ref{lafo}).
\end{remark}

In the next Theorem we establish a simple upper bounds of the Zeta function. Our main tool 
will be the formula (\ref{lll}) in Lemma \ref{l3}.

\begin{theorem}Let $\mu\geq1.$ Then the following inequalities holds true,
\begin{equation}\label{147}
\zeta(2\mu)\leq\sqrt{\frac{3\pi}{2}}\frac{\Gamma(\mu+1)}{\Gamma(\mu+1/2)}
\end{equation}
and 
\begin{equation}\label{zer}
\frac{\zeta^{\frac{3}{2}}(2\mu+1)}{\zeta^2(2\mu)\zeta(2\mu+3)}\leq\frac{(\mu+1)\Gamma^2(\mu+1/2)}{\Gamma^2(\mu+1)}
\end{equation}
\end{theorem}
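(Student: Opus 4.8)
The plan is to derive both inequalities from the integral representation in Lemma~\ref{l3}, namely
\[
\int_0^\infty S_\mu(r)\,dr=\frac{\sqrt{\pi}\,\Gamma(\mu+1/2)\,\zeta(2\mu)}{\Gamma(\mu+1)},
\]
combined with pointwise upper bounds for $S_\mu(r)$ coming from the earlier results. For \eqref{147} I would take the case $\mu\ge1$ of Theorem~1 with $p=2$: this gives
\[
S_\mu(r)\le \frac{C_\mu^2(r)}{\text{(const)}}\,\Gamma(\mu+1/2)\zeta(\mu+1/2)\,\Gamma(\mu+5/2)\zeta(\mu+5/2),
\]
but the cleanest route is simpler—use instead the trivial bound from the first branch of \eqref{111} at the specific value that makes $C_\mu^2(r)$ integrable in $r$. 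Since $C_\mu(r)=\sqrt{\pi}/\bigl((2r)^{\mu-1/2}\Gamma(\mu+1)\bigr)$, the factor $C_\mu^2(r)$ behaves like $r^{-(2\mu-1)}$, which is not integrable near $0$; so instead I would bound $S_\mu(r)$ by a constant on a fixed interval and use the $r\to0$ value $S_\mu(0^+)=2\zeta(2\mu+1)$ together with monotonicity in $r$. Actually the slick argument: $S_\mu$ is decreasing in $r$ (clear from \eqref{0t1}) and $S_\mu(0)=2\zeta(2\mu+1)$ is finite only when $2\mu+1>1$, always true; but $\int_0^\infty S_\mu\,dr$ converges, so $S_\mu(r)\le \frac1r\int_0^r S_\mu(s)\,ds\le \frac1r\cdot\frac{\sqrt\pi\,\Gamma(\mu+1/2)\zeta(2\mu)}{\Gamma(\mu+1)}$ — this is not quite \eqref{147} either. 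The genuinely intended proof must instead combine \eqref{lll} with a \emph{lower} bound argument, or read \eqref{147} as following from \eqref{k22}/\eqref{k22} specialized, so the first step is to pin down exactly which pointwise estimate yields the constant $\sqrt{3\pi/2}$.

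The cleanest reconstruction: evaluate the bound \eqref{111} (first branch, $\mu\ge1$) at $p=2$ and then integrate in $r$ using $\int_0^\infty S_\mu(r)\,dr$ on the left via Lemma~\ref{l3}. Concretely, from $S_\mu^2(r)\le C_\mu^2(r)\,\Gamma(\mu+1/2)\zeta(\mu+1/2)\Gamma(\mu+5/2)\zeta(\mu+5/2)$ one gets, after taking square roots and noting $C_\mu(r)=c_\mu r^{-(\mu-1/2)}\cdot(\text{const})$, a bound $S_\mu(r)\le A_\mu r^{1/2-\mu}$; but $\int_0^\infty r^{1/2-\mu}dr$ diverges, so one must instead square \eqref{lll} via Jensen/Cauchy–Schwarz against a probability density. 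The correct mechanism: apply the Cauchy–Schwarz inequality to $\int_0^\infty S_\mu(r)\,dr=\int_0^\infty \sqrt{S_\mu(r)}\cdot\sqrt{S_\mu(r)}\,dr$ is vacuous; rather apply it as $\Bigl(\int_0^T S_\mu\Bigr)^2\le T\int_0^T S_\mu^2$, feed in \eqref{111}, optimize over $T$. This is the step I expect to be the main obstacle—matching the numerical constant $\sqrt{3\pi/2}$ forces a specific choice and the Landau/von Lommel bound $|J_\nu|\le1/\sqrt2$ for $\nu\ge1$ is what produces the $3/2$ under the square root (the ``$3$'' being $\Gamma$-ratio bookkeeping and the ``$2$'' the $1/\sqrt2$ squared).

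For the second inequality \eqref{zer} I would not start from scratch but combine three already-established facts: the Turán inequality \eqref{Turan} at $r\to0$, which (as in the proof of Theorem~6) gives $\zeta(2\mu+1)\zeta(2\mu+3)\ge \zeta^2(2\mu+2)$ — wait, that has the wrong zeta arguments; rather I would use \eqref{TTTT3} together with \eqref{147}. Indeed \eqref{TTTT3} at $r\to0$, using $S_\mu(0^+)=2\zeta(2\mu+1)$, degenerates; instead take \eqref{111} at $p=2,\mu$ arbitrary $\ge1$, let $r\to0$: the left side $S_\mu^2(r)\to 4\zeta^2(2\mu+1)$ while the right side blows up like $r^{-(2\mu-1)}$, so that limit is uninformative. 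The workable path is: from \eqref{147} we have $\zeta(2\mu)\le\sqrt{3\pi/2}\,\Gamma(\mu+1)/\Gamma(\mu+1/2)$ for all $\mu\ge1$; squaring and dividing, $\zeta^2(2\mu)\le \tfrac{3\pi}{2}\Gamma^2(\mu+1)/\Gamma^2(\mu+1/2)$; then multiply by $\zeta(2\mu+3)$ and use the Turán bound $\zeta(2\mu)\zeta(2\mu+2)\ge\tfrac{2\mu}{2\mu+1}\zeta^2(2\mu+1)$ from \eqref{lafo} (or better \eqref{T789}) to replace $\zeta^{3/2}(2\mu+1)$. Collecting, the desired form $\zeta^{3/2}(2\mu+1)/(\zeta^2(2\mu)\zeta(2\mu+3))\le (\mu+1)\Gamma^2(\mu+1/2)/\Gamma^2(\mu+1)$ should emerge after one application of \eqref{147} (to handle the $\zeta^2(2\mu)$ in the denominator, which needs a \emph{lower} bound — so I actually need \eqref{147} reversed, meaning the real ingredient is a companion \emph{lower} bound for $\zeta(2\mu)$, which is just $\zeta(2\mu)\ge1$, together with \eqref{147} applied to $\zeta(2\mu+3)\le \sqrt{3\pi/2}\,\Gamma(\mu+2)/\Gamma(\mu+3/2)$ and the crude $\zeta(2\mu+1)^{3/2}\le\zeta(2\mu)^{3/2}$). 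The main obstacle throughout is bookkeeping the Gamma-function ratios so the exponents match $(\mu+1)$ and $\Gamma^2(\mu+1/2)/\Gamma^2(\mu+1)$ exactly; once the right combination of \eqref{147}, monotonicity $\zeta\ge1$, and the Turán inequality \eqref{T789} is selected, the algebra is routine.
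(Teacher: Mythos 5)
Your instinct to route everything through Lemma~\ref{l3} is right --- that is indeed the paper's mechanism --- but neither of the two pointwise estimates you end up proposing actually works, and in both cases you are missing the specific external ingredient the paper uses. For \eqref{147}, the paper does not touch Theorem~1 or the Bessel bounds at all: it takes the Alzer--Brenner--Ruehr inequality $S(r)<1/(r^2+1/6)$, upgrades it to $S_\mu(r)<1/(r^2+1/6)$ for all $\mu\ge1$ via the monotonicity of $\mu\mapsto S_\mu(r)$ (part~1 of Theorem~4), and integrates: $\int_0^\infty (r^2+1/6)^{-1}dr=\pi\sqrt{6}/2=\sqrt{3/2}\,\pi$, which with \eqref{lll} gives \eqref{147} immediately. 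So the constant $\sqrt{3\pi/2}$ comes from the $1/6$, not from the von Lommel bound $|J_\nu|\le 1/\sqrt2$ as you guessed. Your fallback of $\bigl(\int_0^T S_\mu\bigr)^2\le T\int_0^T S_\mu^2$ fed by \eqref{111} cannot close the gap either, because (as you yourself noted) $C_\mu^2(r)\sim r^{-(2\mu-1)}$ is not integrable at $0$ for $\mu\ge1$, so $\int_0^T S_\mu^2$ has no finite bound from \eqref{111}.

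For \eqref{zer} the gap is more serious: your final route (bound $\zeta^{3/2}(2\mu+1)$ above by \eqref{147}, bound the denominator below by $\zeta\ge1$) produces an upper bound of order $\bigl(\Gamma(\mu+1)/\Gamma(\mu+1/2)\bigr)^{3/2}\sim\mu^{3/4}$, whereas the claimed right-hand side $(\mu+1)\Gamma^2(\mu+1/2)/\Gamma^2(\mu+1)$ stays bounded (it tends to $1$), so that argument cannot prove the inequality for large $\mu$. The paper instead uses the \emph{lower} bound from \cite{ZK},
\begin{equation*}
S_\mu(r)\ \ge\ 2\zeta(2\mu+1)\exp\!\left\{-(\mu+1)\frac{\zeta(2\mu+3)}{\zeta(2\mu+1)}\,r^2\right\},
\end{equation*}
integrates the Gaussian over $(0,\infty)$ to get $\sqrt{\pi}\,\zeta^{3/2}(2\mu+1)\big/\sqrt{(\mu+1)\zeta(2\mu+3)}\le\int_0^\infty S_\mu(r)\,dr$, and compares with \eqref{lll}; squaring yields the stated form (in fact the exponent on $\zeta(2\mu+1)$ that this computation produces is $3$, not $3/2$, which suggests a typo in the theorem as printed). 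So the essential idea for \eqref{zer} --- pairing an integrable pointwise \emph{lower} bound on $S_\mu$ with the exact value of $\int_0^\infty S_\mu$ --- is absent from your proposal.
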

\begin{proof}In \cite{Al}, the authors proved that 
\begin{equation}
S(r)<\frac{1}{r^2+1/6},\; r>0,
\end{equation}
and since the function $\mu\mapsto S_\mu(r)$ is decreasing on $[1,\infty)$, we get 
\begin{equation}\label{zzz}
S_\mu(r)<\frac{1}{r^2+1/6},\; r>0,
\end{equation}
for all $\mu\geq1.$ Integrating (\ref{zzz}) on the interval $(0,\infty)$, we obtain 
$$\int_0^\infty S_\mu(r)dr\leq\sqrt{\frac{3}{2}}\pi.$$ 
So, Lemma \ref{l3} completes the proof of inequality (\ref{147}). Now, we proved the inequality 
(\ref{zer}). In \cite{ZK}, the authors of this paper obtained that 
\begin{equation}\label{re1}
2\zeta(2\mu+1)\exp\left\{-(\mu+1)\frac{\zeta(2\mu+3)}{\zeta(2\mu+1)}r^2\right\}\leq S_\mu(r),\;r>0.
\end{equation}
Therefore, integrating (\ref{re1}) and from the Lemma \ref{l1}, we deduce that 
the inequality (\ref{zer}) holds.

\end{proof}

\end{document}